\documentclass[11pt,a4paper]{article}

\usepackage[T1]{fontenc}
\usepackage[utf8]{inputenc} % à retirer si compilation LuaLaTeX/XeLaTeX

\usepackage{lmodern}

\usepackage{geometry}
\usepackage{graphicx}
\usepackage{amsmath,amssymb,amsthm,mathtools,stmaryrd,latexsym}
\usepackage{enumitem}
\usepackage[table,dvipsnames]{xcolor}
\usepackage{hyperref}

\newcommand{\bb}[1]{\mathbb{#1}}

\newcommand{\Th}{\mathcal{T}_{h}}
\newcommand{\ThGamma}{\mathcal{T}_{h}^\Gamma}

\newcommand{\vertiii}[1]{%
  {\left\vert\kern-0.25ex\left\vert\kern-0.25ex\left\vert #1
  \right\vert\kern-0.25ex\right\vert\kern-0.25ex\right\vert}
}

\newtheorem{assumption}{Assumption}
\newtheorem{theorem}{Theorem}
\newtheorem{lemma}{Lemma}
\newtheorem{proposition}{Proposition}
\theoremstyle{definition}

\theoremstyle{remark}
\newtheorem{remark}{Remark}

\title{A penalized $\varphi$-FEM scheme for the Poisson Dirichlet problem}

\author{
Raphaël Bulle\thanks{Université de Strasbourg, CNRS, Inria, ICube, F-67000 Strasbourg, France. \texttt{raphael.bulle@inria.fr}}
\and
Michel Duprez\thanks{Université de Strasbourg, CNRS, Inria, ICube, F-67000 Strasbourg, France. \texttt{michel.duprez@inria.fr}}
\and
Vanessa Lleras\thanks{IMAG, Univ Montpellier, CNRS UMR 5149, 34090 Montpellier, France. \texttt{vanessa.lleras@umontpellier.fr}}
\and
Killian Vuillemot\thanks{IMAG, Univ Montpellier, CNRS UMR 5149, 34090 Montpellier, France. \texttt{killian.vuillemot@umontpellier.fr}}
}

\date{} % laisser vide

\begin{document}

\maketitle

\begin{abstract}
In this work, we analyze a penalized variant of the $\varphi$-FEM scheme for the Poisson equation with Dirichlet boundary conditions.
The $\varphi$-FEM is a recently introduced unfitted finite element method based on a level-set description of the geometry, which avoids the need for boundary-fitted meshes.
Unlike the original $\varphi$-FEM formulation, the method proposed here enforces boundary conditions through a penalization term.
This approach has the advantage that the level-set function is required only on the cells adjacent to the boundary in the variational formulation.
The scheme is stabilized using a ghost penalty technique.
We derive a priori error estimates, showing optimal convergence in the $H^1$ semi-norm and quasi-optimal convergence in the $L^2$ norm under suitable regularity assumptions.
Numerical experiments are presented to validate the theoretical results and to compare the proposed method with both the original $\varphi$-FEM and the standard fitted finite element method.
\end{abstract}

\bigskip
\noindent\textbf{Keywords.}
Immersed boundary method; complex geometries; partial differential equations; level-set methods.

% -------------------------------------------------
% Début du papier
% -------------------------------------------------

\section{Introduction}

The finite element method (FEM) usually requires meshes that conform to the domain boundary, which can be restrictive for complex or evolving geometries. To address this limitation, Immersed Boundary Methods (IBM) \cite{IBMrev} allow the use of meshes that do not fit the physical boundary by extending the problem to a larger domain.  More recently, CutFEM \cite{cutfem, cutfem_mixed} has been proposed as a robust alternative, based on integrating the weak formulation only over the parts of elements cut by the boundary and adding stabilization terms to ensure numerical stability and coercivity. 
Another alternative, called the Shifted Boundary Method \cite{sbm}, uses a Taylor expansion of the boundary condition.
The $\varphi$-FEM paradigm introduced in \cite{phifem} is based on a level-set description of the domain, where the solution is written as 
$u=\varphi w$, automatically enforcing boundary conditions and achieving optimal accuracy and good conditioning without requiring boundary or cut-cell integration. This method has been extended to various PDEs and interface problems \cite{ cotin2023phi,phiFEM2, duprez2023phi, duprez2022immersed}.
More recently, this approach has been adapted to a finite difference scheme \cite{phiFD}, combined with a neural network \cite{phifem_fno} and an a posteriori estimator, and has been analysed to estimate the error \cite{becker2025residual}.

In this paper, we consider the Poisson equation with Dirichlet boundary conditions,
 \begin{equation}
      \label{eq:main_poisson_equation_dirichlet_homo}
      \begin{cases}
            - \Delta u & = f\,, \ \hfill \text{ in } \Omega\,, \\
            \hfill u   & = 0\,, \  \hfill \text{ on } \Gamma\,,
      \end{cases}
\end{equation} 
The present contribution is devoted to the numerical analysis of a $\varphi$-FEM scheme introduced in \cite{cotin2023phi} for this problem (see Section 2). This $\varphi$-FEM scheme enforces boundary conditions weakly by introducing a penalization on the unfitted boundary while still using a level-set description of the domain. So this method combines the robustness of mixed formulations with the flexibility of immersed finite element methods.
We define $\Omega \subset \bb{R}^d$ (here $d=2, 3$) a domain of boundary $\Gamma$, $f \in L^2(\Omega)$ and $n$ the outward unit normal to $\Omega$. We assume that the domain and its boundary are given by a level-set function $\varphi$ such that 
\begin{equation}\label{eq:def_omega_gamma_by_phi}
      \Omega:= \{ \varphi < 0 \} \qquad \text{ and } \qquad \Gamma:= \{ \varphi = 0 \}\,.
\end{equation}

The present paper is organized as follows: in Section 2, we present the scheme and the main theoretical results. In the second and third Section, we demonstrate these results. Section 4 analyzes the conditioning number of associated matrix. Finally,  Section 5 is devoted to numerical illustrations.

\section{Proposed scheme and theoretical results}

Let $\mathcal{T}_h^\mathcal{O}$ be a Cartesian grid covering the domain $\mathcal{O}$, with triangular cells of size $h$. Let $\varphi_h = I_{h, \mathcal{O}}^{(l)} \varphi$ the standard Lagrange continuous interpolation of $\varphi$ (of degree $l>0$) on $\mathcal{T}_h^\mathcal{O}$.
This interpolation is then used to construct a submesh $\mathcal{T}_h$ of $\mathcal{T}_h^\mathcal{O}$ collecting all the cells of the grid intersecting the domain $\{ \varphi_h < 0 \}$, i.e.
\begin{equation*}
%      \label{eq:main_Th}
      \mathcal{T}_h:= \left\{T \in \mathcal{T}_h^{\mathcal{O}} : \ T \cap \{ \varphi_h < 0 \} \neq \emptyset  \right\}\,.
\end{equation*}

Let us finally introduce another submesh of $\Th$, collecting the cells intersecting the boundary, i.e. $\{ \varphi_h = 0 \}$
and a set containing the facets of this submesh, given by 
\begin{equation}
      \label{eq:main_ThGamma}
      \mathcal{T}_h^\Gamma:= \left\{T \in \mathcal{T}_h \: \ T \cap \{ \varphi_h = 0 \} \neq \emptyset  \right\}\,,\text{ and }
      \mathcal{F}_h^\Gamma:= \{ F \in \mathcal{T}_h^\Gamma \setminus \partial\Omega_h  \} \,.
\end{equation}

Let us now consider the finite element spaces: for $k\geqslant1$,

\begin{equation}\label{eq:main_Vhk_phifem}
      V_h^{(k)}:= \{ v_h \in H^1(\Omega_h) \: \ v_h|_T \in \bb{P}_k(T) \ \forall \ T \in \mathcal{T}_h \} \,,
\end{equation}
and a second one, the local version of the previous space, on $\ThGamma$, 
\begin{equation*}
      Q_{h}^{(k)}:= \{ q_h \in L^2(\Omega_h^{\Gamma}) \: \ v_h|_T \in \bb{P}_k(T) \ \forall \ T \in \mathcal{T}_h^{\Gamma} \} \,.
\end{equation*}

We will now present the $\varphi$-FEM penalized scheme analyzed here, that relies on the $\varphi$-FEM paradigm used only in the cells of $\Omega_h^\Gamma$. For this scheme, we thus introduce an auxiliary variable $p$ on $\Omega_h^\Gamma$ such that $u = \varphi p$; equation that will be enforced by penalization in the following scheme. 

The penalized $\varphi$-FEM scheme to solve \eqref{eq:main_poisson_equation_dirichlet_homo} is given by: find $u_h \in V_h^{(k)}$ and $p_{h} \in Q_h^{(k)}$ such that 
\begin{multline}\label{eq:scheme_poisson_dirichlet_phi_fem_dual}
    \int_{\Omega_h} \nabla u_h \cdot \nabla v_h - \int_{\partial\Omega_h} \frac{\partial u_h}{\partial n} v_h
    + \frac{\gamma}{h^2} \int_{\Omega_h^{\Gamma}} (u_h - \frac{1}{h}\varphi_h p_{h}) (v_h - \frac{1}{h}\varphi_h q_{h}) \\
    + G_h^{lhs}(u_h,v_h)
    = \int_{\Omega_h} f v_h + G_h^{rhs}(v_h)\,,
\end{multline}
for all $v_h \in V_h^{(k)}$, $q_{h} \in Q_h^{(k)}$ with
\begin{equation}\label{eq:main_stabs_lhs_poisson}
      G_h^{lhs}(u,v) = \sigma_D h \sum_{E \in \mathcal{F}_h^\Gamma} \int_E \left[ \frac{\partial u}{\partial n} \right]\left[ \frac{\partial v}{\partial n} \right]  + \sigma_D h^2 \sum_{T \in \mathcal{T}_h^\Gamma}\int_T \Delta u \Delta v \,,
\end{equation}
and 
\begin{equation}\label{eq:main_stabs_rhs_poisson}
      G_h^{rhs}(v) = - \sigma_D h^2 \sum_{T \in \mathcal{T}_h^\Gamma}\int_T f \Delta v \,
\end{equation}
for %$f_h=I_hf$ and 
some $\gamma,~\sigma_D>0$.
As said in the introduction, this scheme has been introduced in  \cite{cotin2023phi}, but not analyzed.

\begin{remark}
We can remark three important points: 
\begin{itemize}[label=\textbullet]
    \item In the case of $\bb{P}^1$ finite elements, the second-order terms (the ones multiplied by $\sigma_D h^2$) are not necessary. 
    \item In the case of non-homogeneous Dirichlet boundary conditions $u=u_D$, all you need to do is the modification of the penalization form, given by $u = \varphi p + u_D$. It then remains to adapt the scheme to this equation. 
    \item A final observation concerns the comparison with the direct $\varphi$-FEM scheme \cite{phifem}, relying on the assumption $u = \varphi w$ over the entire mesh, thereby redefining the problem’s unknown throughout the domain. This has two major consequences: 
    \begin{itemize}
        \item the computation relies on the level-set only near the boundary, which makes the scheme more robust to the level-set and, in particular, does not require the gradient or the second derivatives of the level-set function, which could present a singularity (for example, if we consider the signed distance function). 
        \item the scheme is compatible with the Neumann approach presented in \cite{phiFEM2}. 
    \end{itemize}
\end{itemize}
\end{remark}

Let us now recall some important hypothesis on the mesh and lemmas from \cite{phifem} and \cite{phifemstokes} and introduce our main theoretical results. 

\begin{assumption}
    \label{assumption1}
    The boundary $\Gamma$ can be covered by open sets $\mathcal{O}_i$, $i=1, \dots, I$ on which we can introduce local coordinates $\xi_1,\dots,\xi_d$ with $\xi_d =\varphi$ such that, up to the order $k+1$, all the partial derivatives $\partial^\alpha \xi_i / \partial x^\alpha$ and $ \partial x^\alpha / \partial^\alpha \xi_i $ are bound by a constant $C_0 > 0$. Moreover, on $\mathcal{O}$, $\varphi$ is of class $C^{k+1}$ and there exists $m>0$ such that $|\varphi| \geqslant m$ on $\mathcal{O} \setminus \cup_{i=1,\dots,I} \mathcal{O}_i$.
\end{assumption}

\begin{assumption}\label{assumption2}
    The approximated boundary, defined by $\Gamma_h = \{\varphi_h = 0\}$, can be covered by patches of elements $\{ \Pi_r\}_{r=1,\dots,N_\Pi}$ such that:
    \begin{itemize}
        \item Each patch $\Pi_r$ can be written as $\Pi_r=\Pi_r^\Gamma\cup T_r$ where $\Pi_r^\Gamma \subset \Th^\Gamma$ and  $T_r \in \Th \setminus \Th^\Gamma$. Moreover $\Pi_r$,  contains at most $M$ connected elements, with $M$ independent of $h$ ;
        \item The mesh $\Th^\Gamma$ satisfies $\ThGamma = \cup_{r=1,\dots, N_\Pi} \Pi_r^\Gamma$;
        \item Two patches $\Pi_r$ and $\Pi_s$ are disjoints if $r \neq s$.
    \end{itemize}
\end{assumption}

Let us now present the main theorem of this paper: 
\begin{theorem}\label{thm:convergence_dual_phi_fem}
    We suppose that assumptions \ref{assumption1} and \ref{assumption2} on $\Gamma$ and $\mathcal{T}_h$ 
    are satisfied, $f \in H^{k-1}(\Omega_h)$, $\Omega \subset \Omega_h$ and $\gamma,~\sigma_D$ large enough.
    Consider  $u \in {H^{k+1}(\Omega)}$ the solution of \eqref{eq:main_poisson_equation_dirichlet_homo} and $u_h \in V_h$ the solution of the discrete scheme \eqref{eq:scheme_poisson_dirichlet_phi_fem_dual}. 
    It holds 
    \begin{equation*}
        \label{eq:estimate_phi_fem_dual_theorem}
        | u - u_h |_{1, {\Omega}} \leqslant C h^k {\| f \|_{k-1, \Omega_h}} 
    \end{equation*}
    with $C>0$ a constant independent on $h$.
\end{theorem}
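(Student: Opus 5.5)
The proof follows the standard $\varphi$-FEM pattern: coercivity in a mesh-dependent norm, a Strang-type argument exploiting near-consistency, and interpolation estimates. We write the scheme as $A_h((u_h,p_h),(v_h,q_h)) = L_h(v_h)$, where $A_h$ collects all bilinear terms. We use the triple norm
\[
\vertiii{(v,q)}^2 = |v|_{1,\Omega_h}^2 + \frac{\gamma}{h^2}\big\|v-\tfrac{1}{h}\varphi_h q\big\|_{0,\Omega_h^\Gamma}^2 + G_h^{lhs}(v,v).
\]

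The first step is \emph{coercivity} of $A_h$ in this norm. The only problematic term is the boundary term $-\int_{\partial\Omega_h}\partial_n v_h\, v_h$. Following the $\varphi$-FEM analysis, we use Assumption~\ref{assumption2} to transfer $|v_h|_{1}$ from the cut cells $\Omega_h^\Gamma$ to the interior cells $T_r$ of each patch. This transfer uses the ghost jumps in $G_h^{lhs}$, which control $\|\nabla v_h\|_{0,\Pi_r^\Gamma}\lesssim \|\nabla v_h\|_{0,T_r} + (h\sum\|[\partial_n v_h]\|^2_F)^{1/2}$. The boundary term is then bounded via a trace inverse inequality by $C\|\nabla v_h\|_{0,\Omega_h^\Gamma}\, h^{-1/2}\|v_h\|_{0,\partial\Omega_h}$. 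The factor $h^{-1/2}\|v_h\|_{0,\partial\Omega_h}$ must then be controlled. Here is the key point: on $\partial\Omega_h$ (away from $\Gamma_h$), $\varphi_h$ is of size $\sim h$, so $v_h = (v_h-\frac1h\varphi_h q_h) + \frac1h\varphi_h q_h$. The penalty term handles the first part. For the second part, the trick I would try first is to test with a modified pair $(v_h, q_h')$ and exploit the freedom in $q_h$. It may be simpler to prove an inf–sup condition rather than plain coercivity: given $(u_h,p_h)$, choose the test pair $(u_h, p_h + \delta\, \tilde q)$ where $\tilde q$ is chosen so that $\frac1h\varphi_h \tilde q \approx u_h$ near $\partial\Omega_h$. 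This is possible because $|\varphi_h|/h$ is bounded below on cut cells near $\partial\Omega_h$ by Assumption~\ref{assumption1}. Combined with a Poincaré-type inequality on $\Omega_h$ using the penalty, this yields $\vertiii{(u_h,p_h)}\lesssim \sup A_h(\cdot,\cdot)/\vertiii{\cdot}$. I expect this step to be the main obstacle, since the Neumann-type boundary term on $\partial\Omega_h$ is not directly penalized.

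The second step is \emph{consistency}. Let $\tilde u$ be an $H^{k+1}$ extension of $u$ to $\Omega_h$ with $\|\tilde u\|_{k+1,\Omega_h}\lesssim\|f\|_{k-1,\Omega_h}$. Since $u=0$ on $\Gamma$ and $\Gamma$ is smooth (Assumption~\ref{assumption1}), we can write $\tilde u=\varphi \tilde p$ near $\Gamma$ with $\tilde p\in H^{k}$, using Hardy-type division in the local coordinates $\xi_d=\varphi$. Defining $p=h\tilde p$ (to match the $\frac1h$ scaling), the pair $(\tilde u,p)$ satisfies the scheme up to the following residuals: the penalty residual $\tilde u-\frac1h\varphi_h p = (\varphi-\varphi_h)\tilde p = O(h^{l+1})$; the ghost terms, which vanish on the exact solution when $f$ is extended by $-\Delta\tilde u$; and $f+\Delta\tilde u$ on $\Omega_h\setminus\Omega$. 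Each residual is bounded by $Ch^k\|f\|_{k-1,\Omega_h}$ in the dual norm, assuming $l\ge k$ or a suitable choice of the extension.

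The third step is the \emph{error estimate}. Split $u-u_h = (\tilde u - I_h\tilde u) + (I_h\tilde u-u_h)$ and similarly $p$, using Lagrange interpolants $I_h\tilde u$ and $I_h p$ on $\Omega_h^\Gamma$. Applying step one to the discrete error and step two for the consistency residual gives $\vertiii{(I_h\tilde u-u_h, I_hp-p_h)}\lesssim \vertiii{(\tilde u-I_h\tilde u, p-I_hp)}_* + \text{residual}$. The interpolation terms are $O(h^k)\|\tilde u\|_{k+1}$; the penalty part scales as $h^{-1}\cdot h^{-1}\|\varphi_h(p-I_hp)\|\lesssim h^{-2}\,h\cdot h^{k+1}\|\tilde p\|_{k}$ since $|\varphi_h|\lesssim h$ on $\Omega_h^\Gamma$. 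The triangle inequality and the restriction to $\Omega\subset\Omega_h$ then give $|u-u_h|_{1,\Omega}\le Ch^k\|f\|_{k-1,\Omega_h}$.
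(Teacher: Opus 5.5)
Your consistency and error-estimate steps follow the paper's argument: the extension $\tilde u$, the choice $p=h\tilde u/\varphi$ via the Hardy-type lemma, the residuals $(\varphi-\varphi_h)p/h$ on $\Omega_h^\Gamma$ and $\tilde f-f$ on $\Omega_h\setminus\Omega$, then interpolation. The genuine gap is the coercivity step, on which everything rests. You leave it open, and the route you sketch cannot close it.

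Suppose you bound $\int_{\partial\Omega_h}\partial_n v_h\,v_h$ by $C|v_h|_{1,\Omega_h^\Gamma}\,h^{-1/2}\|v_h\|_{0,\partial\Omega_h}$, and then use $h^{-1/2}\|v_h\|_{0,\partial\Omega_h}\lesssim |v_h|_{1,\Omega_h^\Gamma}+h^{-1}\|v_h-\frac1h\varphi_h q_h\|_{0,\Omega_h^\Gamma}$, which is the best available since $v_h$ is only known to be small on $\Gamma_h$. This leaves a term $C|v_h|^2_{1,\Omega_h^\Gamma}$. Its constant comes from trace, inverse and Poincar\'e inequalities and is not multiplied by $\gamma$ or $\sigma_D$. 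Your transfer estimate (equivalently Lemma~\ref{lemma:magic_dirichlet}) only gives $|v_h|^2_{1,\Omega_h^\Gamma}\le\alpha|v_h|^2_{1,\Omega_h}+\dots$ with a fixed, mesh-dependent $\alpha\in(0,1)$. You would therefore need $C\alpha<1$, and nothing guarantees that.

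Perturbing the $q$-component cannot repair this, because $q$ enters $A_h$ only through the penalty term:
\[
A_h\big((u_h,p_h),(u_h,p_h+\delta\tilde q)\big)-A_h\big((u_h,p_h),(u_h,p_h)\big)=-\frac{\gamma\delta}{h^3}\int_{\Omega_h^\Gamma}\Big(u_h-\tfrac1h\varphi_hp_h\Big)\varphi_h\tilde q .
\]
This vanishes for every pair with $u_h=\frac1h\varphi_hp_h$ on $\Omega_h^\Gamma$, so on such pairs the $\partial\Omega_h$ term must be beaten by the $u$-terms alone. Moreover, $|\varphi_h|/h$ is not bounded below near $\partial\Omega_h$. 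We do have $\varphi_h\ge 0$ on $\partial\Omega_h$, but it becomes arbitrarily small when $\Gamma_h$ passes close to an outer facet, and Assumption~\ref{assumption1} only controls $\nabla\varphi$.

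The missing idea is to keep the sharp constant one. Since $\varphi_h=0$ on $\Gamma_h$, integrate by parts cellwise over the strip $B_h=\{\varphi_h>0\}\cap\Omega_h\subset\Omega_h^\Gamma$:
\[
\int_{\partial\Omega_h}\partial_n u_h\,u_h=\int_{B_h}|\nabla u_h|^2-\int_{\Gamma_h}\partial_n u_h\Big(u_h-\tfrac1h\varphi_hp_h\Big)-\sum_{F\in\mathcal F_h^\Gamma}\int_{F\cap B_h}[\partial_n u_h]\,u_h+\int_{B_h}\Delta u_h\,u_h .
\]
The first term has constant exactly one, so it is at most $|u_h|^2_{1,\Omega_h^\Gamma}\le\alpha|u_h|^2_{1,\Omega_h}+\beta(\dots)$.

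Each remaining term is a product of two factors:
\begin{itemize}
\item one factor is controlled by $\gamma$ (the trace of $u_h-\frac1h\varphi_hp_h$ on $\Gamma_h$) or by $\sigma_D$ (the jumps, or $h\|\Delta u_h\|_{0,\Omega_h^\Gamma}$);
\item the other is bounded, via trace/inverse inequalities and Lemma~\ref{lemma:phi_p}, by $|u_h|_{1,\Omega_h^\Gamma}+h^{-1}\|u_h-\frac1h\varphi_hp_h\|_{0,\Omega_h^\Gamma}$.
\end{itemize}
Young's inequality with small $\varepsilon$ and large $\gamma,\sigma_D$ then leaves $(1-\alpha-C\varepsilon)|u_h|^2_{1,\Omega_h}$. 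So plain coercivity with the diagonal test pair holds, and no inf--sup argument is needed.
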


We note that our analysis requires less regularity on \(u\) compared to \cite{phifem}. Moreover, the results of \cite{phifem} can be adapted to show that the condition number of the associated matrix scales like \(O(h^{-2})\), while the \(L^2\)-error is of order \(O(h^{k+1/2})\). In the final section of this paper, optimal convergence of order \(O(h^{k+1})\) is observed numerically.

\section{Coercivity of the bilinear form}

This section is devoted to the proof of the coercivity needed for Theorem \ref{thm:convergence_dual_phi_fem}.

\begin{proposition}\label{lemma:coercivity_dual_phi_fem}
    Consider the bilinear form given by
    \begin{multline}
        \label{eq:bilinear_form_poisson_dual}
        a_h(u,p; v,q) = \int_{\Omega_h} \nabla u \cdot \nabla v - \int_{\partial\Omega_h} \frac{\partial u}{\partial n} v
        + \frac{\gamma}{h^2} \int_{\Omega_h^{\Gamma}} (u - \frac{1}{h}\varphi_h p) (v - \frac{1}{h}\varphi_h q) \\
        + \sigma_D h \sum_{F \in \mathcal{F}_h^{\Gamma}} \int_F \left[ \frac{\partial u}{\partial n}  \right]  \left[ \frac{\partial	v}{\partial n} \right]
        + \sigma_D h^2 \int_{\Omega_h^{\Gamma}}  \Delta u \Delta v \,.
    \end{multline}
  Under assumptions 
    1 and 2, for $\gamma$ and $\sigma_D$ sufficiently big, $a_h$
    is coercive on $V_h^{(k)}\times Q_h^{(k)}$ for the norm
    \begin{equation}
        \label{eq:norm_coercitiy_dual_poisson_phi_fem}
        \vertiii{(u,p)}^2_h = | u |_{1,\Omega_h}^2 + \frac{1}{h^2} \left\| u- \frac{1}{h} \varphi_h p \right\|^2_{0,\Omega_h^\Gamma} + h \sum_{F \in \mathcal{F}_h^{\Gamma}}\left\| \left[ \frac{\partial u}{\partial n}  \right] \right\|^2_{0, F}
        + h^2 \| \Delta u \|^2_{0, \Omega_h^{\Gamma}}\,,
    \end{equation}
    i.e. $\vertiii{(u_h,p_h)}^2_h\geqslant  C a_h(u_h,p_h; u_h,p_h)$ for each $(u_h,p_h)\in V_h^{(k)}\times Q_h^{(k)}$.
\end{proposition}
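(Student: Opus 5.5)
The plan is to prove the coercivity in its intended direction, $a_h(u_h,p_h;u_h,p_h)\geqslant c\,\vertiii{(u_h,p_h)}_h^2$, by adapting the coercivity argument of the direct $\varphi$-FEM \cite{phifem}. Setting $(v,q)=(u,p)$ in \eqref{eq:bilinear_form_poisson_dual}, every term is already a nonnegative multiple of a piece of the norm \eqref{eq:norm_coercitiy_dual_poisson_phi_fem}, except the boundary term $-\int_{\partial\Omega_h}\frac{\partial u}{\partial n}u$. This term has no sign and is the only term to control. I would split $u = w + \frac1h\varphi_h p$ on $\Omega_h^\Gamma$, where $w := u-\frac1h\varphi_h p$ is exactly the quantity penalized by $\gamma h^{-2}$.

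\emph{Step 1: the $w$-part.} By Cauchy--Schwarz, a discrete trace inequality and an inverse inequality on each cell of $\mathcal{T}_h^\Gamma$ having a facet on $\partial\Omega_h$,
\[
\Bigl|\int_{\partial\Omega_h}\frac{\partial u}{\partial n}\,w\Bigr|\leqslant C h^{-1/2}|u|_{1,\Omega_h^\Gamma}\,h^{-1/2}\|w\|_{0,\Omega_h^\Gamma}\leqslant \varepsilon |u|^2_{1,\Omega_h^\Gamma}+\frac{C}{\varepsilon h^2}\|w\|^2_{0,\Omega_h^\Gamma}.
\]
For $\gamma$ large, the second term is absorbed by the penalty.

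\emph{Step 2: the $\varphi_h p$-part.} Here I would reproduce the mechanism of \cite{phifem}. Let $S_h=\Omega_h^\Gamma\cap\{\varphi_h>0\}$ be the strip between $\Gamma_h$ and $\partial\Omega_h$. Since $\varphi_h p=0$ on $\Gamma_h$, an elementwise integration by parts over $S_h$ turns $\int_{\partial\Omega_h}\frac{\partial u}{\partial n}\frac1h\varphi_h p$ into three kinds of terms:
\begin{itemize}
\item volume terms $\int_{S_h}\nabla u\cdot\nabla(\frac1h\varphi_h p)$ and $\int_{S_h}\Delta u\,\frac1h\varphi_h p$;
\item facet terms $\sum_F\int_{F\cap S_h}[\frac{\partial u}{\partial n}]\frac1h\varphi_h p$.
\end{itemize}
In all of these I would write $\frac1h\varphi_h p=u-w$. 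The main piece then becomes $-\int_{S_h}|\nabla u|^2$, so the sum of the gradient terms reduces to $|u|^2_{1,\Omega_h\cap\{\varphi_h<0\}}$. The remaining pieces are controlled as follows:
\begin{itemize}
\item the pieces involving $w$ use the inverse estimate $\|\nabla w\|_{0,T}\leqslant Ch^{-1}\|w\|_{0,T}$ and Young's inequality, and are absorbed by the penalty;
\item the $\Delta u$ and jump pieces are controlled through $h^2\|\Delta u\|^2_{0,\Omega_h^\Gamma}$ and $h\sum_F\|[\frac{\partial u}{\partial n}]\|_{0,F}^2$ with weights $\sigma_D$ large, using $|\varphi_h|\leqslant Ch$ on $S_h$ (Assumption~\ref{assumption1}).
\end{itemize}

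\emph{Step 3: recovering the full gradient.} The argument above only gives control of $\nabla u$ away from the strip. To recover all of $|u|_{1,\Omega_h}$, I would use the patch structure of Assumption~\ref{assumption2}: on each patch $\Pi_r=\Pi_r^\Gamma\cup T_r$, a ghost-penalty estimate gives
\[
|u|^2_{1,\Pi_r}\leqslant C\Bigl(|u|^2_{1,T_r}+h\sum_{F\subset\Pi_r}\|[\tfrac{\partial u}{\partial n}]\|^2_{0,F}\Bigr),
\]
since a piecewise polynomial with zero normal-derivative jumps is a single polynomial on the patch, and then finite-dimensionality and scaling apply. Summing over the disjoint patches, with $T_r\notin\mathcal{T}_h^\Gamma$, shows that $|u|^2_{1,\Omega_h^\Gamma}$, and in particular the $\varepsilon$-term of Step 1, is dominated by the interior gradient plus the jump term. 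Choosing $\varepsilon$ small, then $\sigma_D$ and $\gamma$ large, concludes the proof.

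\emph{Main obstacle.} I expect the main obstacle to be Step 2. In the term $\int_{S_h}\Delta u\,\frac1h\varphi_h p$, replacing $\frac1h\varphi_h p$ by $u-w$ produces $\|u\|_{0,S_h}$, which the norm does not control directly. Keeping instead $|\varphi_h|\leqslant Ch$ leaves $\|p\|$, which is not controlled either. My first attempt would be a Hardy/Poincaré-type estimate on the thin strip: $\|u\|_{0,S_h}\leqslant Ch|u|_{1,\Omega_h^\Gamma}+\|w\|_{0,S_h}$. This uses that $u$ nearly vanishes on $\Gamma_h$ up to $w$, and that $S_h$ has width $O(h)$. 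The product $h\|\Delta u\|\cdot h^{-1}\|u\|_{0,S_h}$ would then be bounded by $\sqrt{\sigma_D}$-weighted terms plus $|u|_{1,\Omega_h^\Gamma}$ with a small constant. That constant in turn is absorbed via Step 3.
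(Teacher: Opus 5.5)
Step 3 is where your argument breaks for $k\geqslant 2$. You justify the patch estimate by saying that a continuous piecewise polynomial with vanishing normal-derivative jumps is a single polynomial on the patch. This is true only for $k=1$. Continuity and $[\partial_n u]=0$ on a facet $F\subset\{\ell_F=0\}$ shared by $T$ and $T'$ only give $u|_{T'}-u|_{T}=\ell_F^2\,r$ with $r\in\mathbb{P}_{k-2}$. For instance, on a two-element patch $\{T_r,T'\}$, take $u=0$ on $T_r$ and $u=\ell_F^2$ on $T'$. Then $|u|_{1,T_r}=0$ and there is no normal-derivative jump, yet $|u|_{1,\Pi_r}>0$. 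So the estimate $|u|^2_{1,\Pi_r}\leqslant C(|u|^2_{1,T_r}+h\sum_F\|[\partial_n u]\|^2_{0,F})$ is false, and your finite-dimensionality argument has a nontrivial kernel.

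The missing ingredient is the second-order ghost term: add $h^2\|\Delta u\|^2_{0,\Pi_r^\Gamma}$ to the right-hand side. Then $\Delta(\ell_F^2 r)=0$ forces $r=0$, since a harmonic polynomial vanishing together with its normal derivative on a hyperplane is zero. Propagating from $T_r$ through the patch, the kernel reduces to constants, and the scaling argument goes through. This is exactly Lemma~\ref{lemma:magic_dirichlet}, and it is why the scheme needs the $\sigma_D h^2\int\Delta u\,\Delta v$ term when $k\geqslant 2$. That term belongs to $\vertiii{\cdot}_h$ with weight $\sigma_D$, so your absorption step (small $\varepsilon$, then large $\sigma_D$ and $\gamma$) still works, even with a non-small constant in the patch estimate.

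Apart from this, your plan follows the paper's proof, and integrating by parts over $S_h=B_h$ gives the same identity. The Poincaré-type bound in your last paragraph is Lemma~\ref{lemma:phi_p}. It is cleanest applied directly to $\psi=\frac1h\varphi_h p_h$, which vanishes on $\Gamma_h$: $\|\psi\|_{0,\Omega_h^\Gamma}\leqslant Ch\|\nabla\psi\|_{0,\Omega_h^\Gamma}\leqslant C(h|u_h|_{1,\Omega_h^\Gamma}+\|w\|_{0,\Omega_h^\Gamma})$. State it over whole cells of $\Omega_h^\Gamma$ rather than only over $S_h$, so that the facet terms can be handled with a discrete trace inequality. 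As you noticed, the $|\varphi_h|\leqslant Ch$ route in your Step 2 does not close.

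Your one structural deviation is peeling off $w$ on $\partial\Omega_h$, instead of keeping the paper's term $\int_{\Gamma_h}\frac{\partial u_h}{\partial n}\,w$. This replaces a trace inequality on the cut surface $\Gamma_h$ with standard discrete trace and inverse inequalities, and is fine. One bookkeeping point: $Q_h^{(k)}$ is discontinuous, so your facet terms also contain $\{\partial_n u_h\}[\psi]=-\{\partial_n u_h\}[w]$. This is absorbed exactly like Step 1.
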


Before proving Proposition \ref{lemma:coercivity_dual_phi_fem}, we need the following Lemma : 
\begin{lemma}\label{lemma:phi_p}
For all $u_h \in V_h^{(k)}$ and $p_h \in Q_h^{(k)}$, there exists $C>0$ such that  \begin{equation*}%\label{eq:lamma_phi_p}
        \| \varphi_h p_h \|_{0,\Omega_h^\Gamma} \leqslant C (h \|\nabla u_h \|_{0,\Omega_h^\Gamma} + \| u_h - \varphi_h p_h\|_{0,\Omega_h^\Gamma})\,.
    \end{equation*}
\end{lemma}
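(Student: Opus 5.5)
The plan is to argue cell by cell on $\mathcal{T}_h^\Gamma$ rather than on the patches of Assumption \ref{assumption2}. On each cell we compare $u_h$ with a constant, and we exploit the fact that $\varphi_h$ vanishes somewhere in every cut cell. Fix $T \in \mathcal{T}_h^\Gamma$ and let $c_T$ be the mean value of $u_h$ on $T$. By the Poincaré inequality on the shape-regular cell $T$,
\begin{equation*}
\| u_h - c_T \|_{0,T} \leqslant C h \|\nabla u_h\|_{0,T}.
\end{equation*}
Writing $c_T - \varphi_h p_h = (u_h - \varphi_h p_h) - (u_h - c_T)$ then gives
\begin{equation*}
\| c_T - \varphi_h p_h \|_{0,T} \leqslant \| u_h - \varphi_h p_h\|_{0,T} + C h \|\nabla u_h\|_{0,T}.
\end{equation*}
It therefore suffices to prove a local, purely finite-dimensional estimate: for every constant $c$ and every $q \in \bb{P}_k(T)$,
\begin{equation*}
\|c\|_{0,T} + \|\varphi_h q\|_{0,T} \leqslant C \|c - \varphi_h q\|_{0,T},
\end{equation*}
with $C$ independent of $T$ and $h$. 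Applying this with $c=c_T$, $q = p_h|_T$, squaring and summing over $T\in\mathcal{T}_h^\Gamma$ yields the lemma.

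To prove the local estimate, I will map $T$ to the reference cell $\hat T$ via the affine map $F_T$ and set $\psi_T = h^{-1}\varphi_h\circ F_T \in \bb{P}_l(\hat T)$. The $L^2$ norms of both sides scale identically, and the factor $h^{-1}$ is absorbed into $q$ because $q$ is arbitrary. Two properties of $\psi_T$ are needed, each uniformly in $T$ and $h$.
\begin{itemize}[label=\textbullet]
\item $\psi_T$ vanishes at some point of $\hat T$. This holds because $T$ meets $\{\varphi_h = 0\}$.
\item $\psi_T$ lies in a bounded set with $|\nabla \psi_T|$ bounded below by some $m_0>0$. This follows from Assumption \ref{assumption1}: $|\nabla\varphi|$ is bounded above and below near $\Gamma$, since $\xi_d=\varphi$ is a coordinate with bounded inverse Jacobian, and interpolation estimates give $\|\nabla(\varphi-\varphi_h)\|_{\infty}\leqslant Ch^{l}$. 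Hence for $h$ small, $\nabla \varphi_h$ is also bounded above and below on cut cells, which after scaling gives the bounds on $\nabla\psi_T$.
\end{itemize}
Let $K$ be the set of $\psi \in \bb{P}_l(\hat T)$ satisfying these constraints. It is closed and bounded in a finite-dimensional space, hence compact.

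Next consider the continuous function $(\psi, c, q) \mapsto \|c - \psi q\|_{0,\hat T}$ on $K \times \{(c,q)\in\bb{R}\times\bb{P}_k(\hat T): |c|+\|q\|_{0,\hat T}=1\}$. It is strictly positive. Indeed, $\psi q = c$ with $c\neq 0$ is impossible, since $\psi$ vanishes at a point of $\hat T$. If $c=0$, then $\psi q=0$ forces $q=0$, because $\psi$ is a nonzero polynomial (its gradient is nonzero). By compactness the function has a positive minimum, which gives $|c|+\|q\|_{0,\hat T} \leqslant C\|c-\psi q\|_{0,\hat T}$. Combining this with $\|\psi q\|_{0,\hat T}\leqslant C\|q\|_{0,\hat T}$, valid uniformly on $K$, and scaling back to $T$ yields the local estimate.

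The main obstacle is the uniformity of the constant. This requires showing rigorously that the rescaled level sets $\psi_T$ stay in a compact set of nondegenerate polynomials, which in turn needs the lower bound on $|\nabla\varphi_h|$ on cut cells from Assumption \ref{assumption1} together with interpolation estimates, for $h$ small enough. Once this is in place, the rest is routine. Cells that are only barely touched by $\Gamma_h$ cause no difficulty, since the argument only uses a single zero of $\psi_T$ in the closed cell.
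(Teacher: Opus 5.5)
Your argument is correct, but it takes a different route from the paper.

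\textbf{The paper's route.} It works directly with $\varphi_h p_h$. Since this function vanishes on $\Gamma_h$, a Poincaré inequality gives $\|\varphi_h p_h\|_{0,\Omega_h^\Gamma}\leqslant Ch\|\nabla(\varphi_h p_h)\|_{0,\Omega_h^\Gamma}$. The paper then splits $\nabla(\varphi_h p_h)=\nabla u_h-\nabla(u_h-\varphi_h p_h)$ and applies an inverse inequality to the piecewise polynomial $u_h-\varphi_h p_h$.

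\textbf{Your route.} You subtract the cell mean of $u_h$ and reduce the lemma to a transversality estimate between constants and $\varphi_h\bb{P}_k(T)$. You prove that estimate by compactness on the reference cell. This needs no inverse inequality and makes the uniformity of the constant explicit. The paper, by contrast, simply asserts its Poincaré step. Because $p_h$ is discontinuous, that step must be read cellwise with a broken gradient. On cells only barely touched by $\Gamma_h$ (for instance at a single vertex) it is not the textbook Poincaré inequality, and its uniformity also rests on the polynomial structure.

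\textbf{An unnecessary hypothesis.} What you flag as the main obstacle is self-inflicted. You invoke nondegeneracy of $\varphi_h$ (Assumption \ref{assumption1}, $|\nabla\varphi_h|\geqslant m_0$ on cut cells, $h$ small) only because you normalize by $|c|+\|q\|_{0,\hat T}$. Normalize instead by $|c|+\|w\|_{0,\hat T}$, with $w=(\varphi_h q)\circ F_T\in\bb{P}_{k+l}(\hat T)$.

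\begin{itemize}
\item The only property of $w$ you then need is that it vanishes at some point of $\hat T$, which is a closed condition.
\item The identity $w=c$ then forces $c=0$ and hence $w=0$.
\item Compactness of the normalized set gives $|c|+\|w\|_{0,\hat T}\leqslant C\|c-w\|_{0,\hat T}$ with $C$ depending only on $k+l$ and $\hat T$.
\end{itemize}

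No information about the rescaled level set $\psi_T$ is needed. The same observation justifies the paper's cellwise Poincaré step. If $w\in\bb{P}_{k+l}(\hat T)$ vanishes at $\hat x_0\in\hat T$, then equivalence of norms on $\bb{P}_{k+l}$ and Poincaré--Wirtinger give $|\bar w|=|(w-\bar w)(\hat x_0)|\leqslant C\|w-\bar w\|_{0,\hat T}\leqslant C\|\nabla w\|_{0,\hat T}$. Both routes therefore hold under the sole condition that $\varphi_h$ vanishes somewhere in each cell of $\mathcal{T}_h^\Gamma$, which is true by definition.
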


\begin{proof}
    Using the Poincaré inequality, combined to the triangular inequality and an inverse inequality, we obtain
    \begin{multline*}
       \| \varphi_h p_h \|_{0,\Omega_h^\Gamma} \leqslant Ch\| \nabla \varphi_h p_h \|_{0,\Omega_h^\Gamma} 
        \leqslant Ch(\| \nabla u_h \|_{0,\Omega_h^\Gamma} + \| \nabla (u_h -\varphi_h p_h) \|_{0,\Omega_h^\Gamma} )       \\
                       \leqslant C(h\| \nabla u_h \|_{0,\Omega_h^\Gamma} +  \| u_h - \varphi_h p_h \|_{0,\Omega_h^\Gamma})\,.
    \end{multline*}
\end{proof}

 We also need the following Lemma :
 \begin{lemma}[{cf. \cite[Lemma 3.3]{phifem}}]\label{lemma:magic_dirichlet}
Under the Assumption \ref{assumption2}, for all $\beta >0$, % and $s \in \bb{N}^*$, 
one can choose $\alpha \in ]0,1[$ depending only on the regularity of the mesh, % and of $s$, 
such that for all $v_h \in V_h^{(k)}$
    \begin{equation*}%\label{eq:magic_dirichlet}
        |v_h|^2_{1,\Omega_h^{\Gamma}} \leqslant \alpha |v_h|_{1,\Omega_h}^2 + \beta h
\sum_{F \in \mathcal{F}_h^{\Gamma}}     \left\| \left[ \frac{\partial v_h}{\partial n} \right] \right\|_{0, F}^2+ \beta h^2  \| \Delta v_h \|_{0,\Omega_h^{\Gamma}}^2 \,.
    \end{equation*} 
\end{lemma}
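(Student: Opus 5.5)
The plan is to localize the estimate to the patches of Assumption \ref{assumption2}, prove a uniform patchwise inequality by a finite-dimensional compactness (scaling) argument, and then sum over the disjoint patches. Fix a patch $\Pi_r=\Pi_r^\Gamma\cup T_r$ and let $\mathcal{F}_r$ be the interior facets of $\Pi_r$ that belong to $\mathcal{F}_h^\Gamma$. This includes the facets between $T_r$ and $\Pi_r^\Gamma$, since these are facets of cells of $\ThGamma$ that do not lie on $\partial\Omega_h$. The goal is to show that for every $\beta>0$ there exists $\alpha\in]0,1[$, depending only on mesh regularity and $k$, such that for all continuous piecewise $\bb{P}_k$ functions $v$ on $\Pi_r$,
\begin{equation*}
|v|^2_{1,\Pi_r^\Gamma}\leqslant \alpha|v|^2_{1,\Pi_r}+\beta h\sum_{F\in\mathcal{F}_r}\left\|\left[\frac{\partial v}{\partial n}\right]\right\|^2_{0,F}+\beta h^2\|\Delta v\|^2_{0,\Pi_r^\Gamma}.
\end{equation*}
Summing this over $r$, using $\ThGamma=\cup_r\Pi_r^\Gamma$, the disjointness of the patches and $\cup_r\Pi_r\subset\Omega_h$, then gives the lemma.

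To prove the patchwise estimate I will map $\Pi_r$ affinely to a reference patch of diameter $O(1)$. Both sides scale identically in $h$: the gradient seminorm is invariant in 2D and scales uniformly in general, $h\|[\partial_n v]\|^2_F$ scales like $|v|_1^2$, and so does $h^2\|\Delta v\|^2$. By shape regularity and the bound $M$ on the number of cells, only finitely many reference configurations occur, up to a compact family of shapes. It therefore suffices to prove the estimate on a fixed reference patch $\hat\Pi$ with a constant that depends continuously on the geometry. On $\hat\Pi$, work in the finite-dimensional quotient space of continuous piecewise $\bb{P}_k$ functions modulo constants, normalized by $|v|_{1,\hat\Pi}=1$. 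I will argue by contradiction. If the estimate failed for every $\alpha<1$, there would be a sequence with $|v_n|_{1,\hat\Pi^\Gamma}\to 1$ and $|v_n|_{1,\hat T}\to0$, while the jump and Laplacian terms stay bounded by $\beta^{-1}(1-\alpha_n)\to0$. By compactness we extract a limit $v$ with $|v|_{1,\hat\Pi}=1$, $\nabla v=0$ on $\hat T$, $[\partial_n v]=0$ on all facets of $\mathcal{F}_r$, and $\Delta v=0$ on each cell of $\hat\Pi^\Gamma$.

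The main obstacle is the unique continuation step showing that such a $v$ must be constant, which contradicts $|v|_{1,\hat\Pi}=1$. I will propagate cell by cell through the connected patch, starting from $\hat T$. Suppose $v=c$ on a cell $K$ and $K'$ shares a facet $F\subset\{\ell=0\}$ with $K$, where $\ell$ is affine. Continuity of $v$ and of $\partial_n v$ across $F$ forces $v|_{K'}-c=\ell^2 q$ with $q\in\bb{P}_{k-2}$. Harmonicity gives
\begin{equation*}
0=\Delta(\ell^2 q)=2|\nabla\ell|^2q+4\ell\,\nabla\ell\cdot\nabla q+\ell^2\Delta q .
\end{equation*}
Restricting to $\ell=0$ gives $q=\ell q_1$, and repeating the argument with $\ell^3q_1$, and so on, lowers the degree until $q=0$. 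Hence $v=c$ on $K'$. This is precisely where the jump term alone would fail for $k\geqslant2$, since $C^1$ piecewise polynomials need not be globally polynomial, and where the $h^2\|\Delta v\|^2$ term is needed. For $k=1$ the jump term suffices, consistent with the Remark. Connectedness of $\Pi_r$ then makes $v$ constant on the whole patch.

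For uniformity over the admissible patch geometries, I will use that the best constant depends continuously on the vertex positions. It can then be bounded on the compact set of shape-regular configurations, and if necessary the contradiction argument can be run with the geometry varying along the sequence as well.
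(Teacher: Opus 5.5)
Your proposal is correct and follows the same route as the proof of \cite[Lemma 3.3]{phifem}, which this paper only quotes without proof: localize to the disjoint patches of Assumption~\ref{assumption2}, reduce by scaling and compactness of the admissible patch shapes to a finite-dimensional statement on a fixed patch, and show that the only functions with $\nabla v=0$ on $T_r$, no normal-derivative jumps on interior facets and vanishing cellwise Laplacian on $\Pi_r^\Gamma$ are constants. Your facet-by-facet factorization $v-c=\ell^2 q$, followed by the degree-lowering harmonicity argument, is a clean and elementary way to carry out this unique-continuation step; note only that it (as any patchwise argument must) uses that the cells of each patch are connected through facets, which is how the connectedness in Assumption~\ref{assumption2} has to be read.
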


\begin{proof}[{Proof of Proposition \ref{lemma:coercivity_dual_phi_fem}}]
Let $B_h$ the strip between $\partial\Omega_h$ and $\Gamma_h$, i.e. $B_h = \{ \varphi_h > 0 \} \cap \Omega_h$. Since $\varphi_h = 0$ on $\Gamma_h$, the boundary term of \eqref{eq:bilinear_form_poisson_dual}
can be written for all $(u_h,p_h)\in V_h^{(k)}\times Q_h^{(k)}$ as :
        \begin{multline}
        \label{eq:coercivity_first_eq}
        \int_{\partial \Omega_h} \frac{\partial u_h}{\partial n} u_h
        = \overbrace{\int_{B_h} |\nabla u_h|^2}^{I}- \overbrace{\int_{\Gamma_h} \frac{\partial u_h}{\partial n} (u_h-\frac{1}{h}\varphi_h p_h)}^{II}\\ 
        -  \underbrace{\sum_{F \in \mathcal{F}_h^\Gamma} \int_{F\cap B_h} \left[\frac{\partial u_h}{\partial n}\right] u_h }_{III}
         + \underbrace{\int_{B_h} \Delta(u_h)u_h}_{IV}.
    \end{multline}
 Using Lemma \ref{lemma:magic_dirichlet}, for all $\beta>0$, there exists $\alpha\in(0,1)$ such that
    \begin{equation*}
        I \leqslant
         |u_h|_{1, \Omega_h^\Gamma}^2
\leqslant        
         \alpha |u_h |_{1, \Omega_h}^2 + \beta h\sum_{F \in \mathcal{F}_h^{\Gamma}}  \left\| \left[ \frac{\partial u_h}{\partial n} \right] \right\|_{0, F}^2 + \beta h^2 \| \Delta u_h \|_{0, \Omega_h^\Gamma}^2\,.
    \end{equation*}
 Moreover, thanks to the trace inequality and an inverse inequality, for all $\varepsilon>0$,
    \begin{multline*}
        II  \leqslant C \left( \frac{1}{\sqrt{h}} \|\nabla u_h \|_{0, \Omega_h^\Gamma} + \sqrt{h} |\nabla u_h |_{1, \Omega_h^\Gamma} \right) \\
        \times\left( \frac{1}{\sqrt{h}} \left\| u_h- \frac{1}{h} \varphi_h p_h \right\|_{0, \Omega_h^\Gamma} + \sqrt{h} \left| u_h- \frac{1}{h} \varphi_h p_h \right|_{1, \Omega_h^\Gamma} \right) \\ 
             \leqslant \frac{C}{h} | u_h |_{1, \Omega_h^\Gamma} \left\| u_h- \frac{1}{h} \varphi_h p_h \right\|_{0, \Omega_h^\Gamma} 
             \leqslant C \varepsilon | u_h |_{1, \Omega_h^\Gamma}^2 + \frac{C}{\varepsilon h^2} \left\| u_h- \frac{1}{h} \varphi_h p_h \right\|_{0, \Omega_h^\Gamma}^2\,.
    \end{multline*}
 Cauchy-Schwarz, Young and trace inequalities as well Lemma \ref{lemma:phi_p} lead to
    \begin{equation*}
        III \leqslant \frac{C}{\varepsilon} h \sum_{F \in \mathcal{F}_h^{\Gamma}} \left\| \left[ \frac{\partial u_h}{\partial n} \right] \right\|_{0, F}^2 + C \varepsilon | u_h |_{1, \Omega_h^\Gamma}^2 + \frac{C \varepsilon}{h^2}\left\| u_h- \frac{1}{h} \varphi_h p_h \right\|_{0, \Omega_h^\Gamma}^2\,.
    \end{equation*}
    Finally, Lemma \ref{lemma:phi_p} gives
    \begin{equation*}
        IV \leqslant \frac{C h^2}{\varepsilon} \| \Delta u_h \|_{0, \Omega_h^\Gamma}^2 + C \varepsilon | u_h |_{1, \Omega_h^\Gamma}^2 + \frac{C \varepsilon}{h^2} \left\| u_h- \frac{1}{h} \varphi_h p_h \right\|^2_{0, \Omega_h^\Gamma}.
    \end{equation*}
   Hence, 
   \begin{multline*}
        a_h(u_h,p_h;u_h,p_h) \geqslant (1- \alpha - C \varepsilon) | u_h |_{1, \Omega_h}^2 \\ 
        + \left( \sigma_D - \frac{C}{\varepsilon} - \beta \right) \left( h \sum_{F \in \mathcal{F}_h^{\Gamma}}\left\| \left[ \frac{\partial u_h}{\partial n} \right] \right\|_{0, F}^2 + h^2  \| \Delta u_h \|_{0, \Omega_h^\Gamma}^2 \right) \\
        + \left(\gamma - C \varepsilon - \frac{C}{\varepsilon} \right) \frac{1}{h^2} \left\| u_h- \frac{1}{h} \varphi_h p_h \right\|^2_{0, \Omega_h^\Gamma}.
    \end{multline*}
    Choosing $\varepsilon$ sufficiently small, $\sigma_D$ and $\gamma$ sufficiently big, we obtain the desired result.
\end{proof}

\section{Proof of the $H^1$ estimate}

To prove Theorem \ref{thm:convergence_dual_phi_fem}, we will need the two following results: 
 \begin{lemma}[{cf. \cite[Lemma 3.4]{phiFEM2}}]\label{lemma:phifem_neumann_lemma_34}
     Under the assumption \ref{assumption1}, all function $v \in H^{s} ( \Omega_h)$ vanishing on $\Omega$, with $1 \leqslant s \leqslant k+1$, satisfies 
     $$\| v\|_{0, \Omega_h \setminus \Omega} \leqslant C h^s \| v \|_{s, \Omega_h \setminus \Omega}.$$
 \end{lemma}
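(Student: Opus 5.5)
The plan is to prove the estimate first in the normal direction inside the local charts of Assumption~\ref{assumption1}, and then to sum over charts and over the region where $\Omega_h\setminus\Omega$ is away from $\Gamma$. Consider a chart $\mathcal{O}_i$ with local coordinates $\xi=(\xi',\xi_d)$ and $\xi_d=\varphi$. In these coordinates $\Omega$ is $\{\xi_d<0\}$ and $\Omega_h\setminus\Omega$ is $\{0\leqslant \xi_d\leqslant \delta(\xi')\}$. The first step is to check that the strip has width $\delta(\xi')\leqslant Ch$. This follows because $\Omega_h$ consists of cells meeting $\{\varphi_h<0\}$, each of diameter $h$, and $|\varphi-\varphi_h|\leqslant Ch^{l+1}$. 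Hence every point of $\Omega_h$ lies within distance $O(h)$ of $\{\varphi\leqslant 0\}$. Since $|\nabla\varphi|$ is bounded below near $\Gamma$ (the Jacobian bounds in Assumption~\ref{assumption1}), we get $\varphi\leqslant Ch$ on $\Omega_h\setminus\Omega$.

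The second step is a one-dimensional argument. Take $v$ smooth (then argue by density) with $v$ vanishing on $\Omega$, so that all derivatives up to order $s-1$ in the $\xi_d$ direction vanish at $\xi_d=0$; this is true for $v\in H^s(\Omega_h)$ by trace continuity from the side $\xi_d<0$, where $v\equiv 0$. The iterated Taylor formula with integral remainder then gives
\begin{equation*}
v(\xi',t)=\frac{1}{(s-1)!}\int_0^t (t-\tau)^{s-1}\partial_{\xi_d}^s v(\xi',\tau)\,d\tau .
\end{equation*}
Applying Cauchy--Schwarz and integrating over $t\in[0,\delta(\xi')]$ with $\delta\leqslant Ch$ gives
\begin{equation*}
\int_0^{\delta}|v|^2\,dt\leqslant Ch^{2s}\int_0^{\delta}|\partial^s_{\xi_d}v|^2\,d\tau .
\end{equation*}
Integrating in $\xi'$ and changing variables back to $x$ then yields the bound on $\mathcal{O}_i\cap(\Omega_h\setminus\Omega)$. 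The Jacobian and chain-rule factors are controlled by $C_0$ up to order $k+1\geqslant s$, so $\|\partial^s_{\xi_d}v\|$ is bounded by $C\|v\|_{s}$ in $x$-coordinates. For non-integer $s$ I would use interpolation between integer values, although the statement seems intended for integer $s$.

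The third step treats the points of $\Omega_h\setminus\Omega$ outside $\cup_i\mathcal{O}_i$. There $|\varphi|\geqslant m$, whereas the first step shows $\varphi\leqslant Ch$ on the strip, so this set is empty once $h<m/C$. Larger $h$ are absorbed into the constant, because $\Omega_h\setminus\Omega$ then has bounded diameter and the bound is trivial with $C$ depending on $h_0$. Finally, the overlaps of the charts are handled by summing over $i=1,\dots,I$, which is a finite number independent of $h$.

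The main obstacle is the first step, the uniform $O(h)$ width of the strip in $\varphi$. A second delicate point is that the vanishing of $v$ on $\Omega$ gives vanishing traces of the normal derivatives up to order $s-1$ only in the $H^s$ trace sense. I would handle this by proving the inequality for $v\in C^\infty(\overline{\Omega_h})$ vanishing on $\Omega$, using that such functions are dense in the subspace of $H^s(\Omega_h)$ vanishing on $\Omega$. This density follows by extension across $\Gamma$ from the outside and mollification after a small outward shift in $\xi_d$.
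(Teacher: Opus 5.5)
The paper gives no proof of this lemma. It is imported from \cite[Lemma 3.4]{phiFEM2}, so your argument has to stand on its own. Your route is the natural one: charts with $\xi_d=\varphi$, a strip of width $O(h)$, and a one-dimensional Taylor formula that uses the vanishing of $\partial_{\xi_d}^j v$, $j<s$, on $\Gamma$. The width bound is fine, but it rests on the Lipschitz bound for $\varphi$, not on a lower bound for $|\nabla\varphi|$. If $x,y\in T$ with $\varphi_h(y)<0$, then $\varphi(x)\leqslant \|\varphi-\varphi_h\|_\infty+h\|\nabla\varphi\|_\infty\leqslant Ch$, and since $\xi_d=\varphi$ this is already $\xi_d\leqslant Ch$. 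You also need to shrink the charts so that $\xi_d$-segments of length $O(h)$ stay inside a single chart.

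The genuine gap is your unproved sentence ``$\Omega_h\setminus\Omega$ is $\{0\leqslant\xi_d\leqslant\delta(\xi')\}$''. It says that for every $(\xi',t)\in\Omega_h\setminus\Omega$ the whole segment $\{\xi'\}\times[0,t]$ lies in $\Omega_h$. Your Taylor formula needs exactly this, because $v$ is only defined on $\Omega_h$ and the right-hand side only controls $\partial^s_{\xi_d}v$ there. So the real obstacle is the \emph{shape} of the strip, not its width, and nothing in Assumption~\ref{assumption1} provides it.

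The reason is that $\Omega_h$ is a union of whole cells. A cell can belong to $\mathcal{T}_h$ because of one vertex in $\Omega$ while overhanging a neighbour whose vertices all lie in $\{\varphi\geqslant 0\}$. For example, take $\varphi=y$ and small $\epsilon,\eta,\eta'>0$:
\begin{itemize}
\item $T=\mathrm{conv}\{(-2h,-\epsilon),(0,\eta),(h,h)\}$ is in $\mathcal{T}_h$;
\item its neighbour $T'=\mathrm{conv}\{(0,\eta),(h,h),(h,\eta')\}$ is not;
\item the vertical segment from a point of $T$ just above the midpoint of the common edge down to $\Gamma$ runs through $T'$ over a length of about $h/2$.
\end{itemize}
Even on the Cartesian grid, thin defects of this kind can occur in near-degenerate configurations if the chart's $\xi_d$-lines are tilted relative to the grid, e.g.\ normal lines of a curved $\Gamma$.

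There are two ways to close the gap.
\begin{itemize}
\item On the paper's Cartesian triangulation, take charts whose $\xi_d$-lines are grid lines: locally $\xi=(x',\varphi)$ wherever $\partial_{x_d}\varphi$ is bounded away from zero. Then check cell by cell, using the monotonicity of $\varphi_h$ along these lines for small $h$, that each cell entered while moving toward $\Gamma$ still meets $\{\varphi_h<0\}$.
\item For general meshes, replace the global one-dimensional argument by a local one. Since $v=0$ on $\Omega$, you have $\|v\|_{s,\Omega_h\setminus\Omega}=\|v\|_{s,\Omega_h}$. Put each cell $T$ meeting $\Omega_h\setminus\Omega$ into a patch $D_T\subset\Omega_h$ of boundedly many cells that contains a ball $B_T\subset\Omega$ of radius $ch$; for $l=1$, the star of a vertex of $T$ where $\varphi<0$ works. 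Let $\pi v\in\mathbb{P}_{s-1}$ be the averaged Taylor polynomial on $D_T$. Because $v=0$ on $B_T$, norm equivalence on $\mathbb{P}_{s-1}$ gives $\|\pi v\|_{0,D_T}\leqslant C\|\pi v - v\|_{0,B_T}$. Hence $\|v\|_{0,D_T}\leqslant C\|v-\pi v\|_{0,D_T}\leqslant Ch^s|v|_{s,D_T}$, and the finite overlap of the patches concludes.
\end{itemize}
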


\begin{lemma}
  \label{lemma:hardy}
  We assume that the domain $\Omega$ is given by the
  level-set $\phi$, cf. (\ref{eq:def_omega_gamma_by_phi}), and satisfies Assumption \ref{assumption1}. Then, for any $u \in H^{k + 1} (\mathcal{O})$ vanishing on $\Gamma$,
  \[ \left\| \frac{u}{\phi} \right\|_{k, \mathcal{O}} \le C \| u
     \|_{k + 1, \mathcal{O}} \]
  with $C > 0$ depending only on the constants in Assumption \ref{assumption1}. 
\end{lemma}

\begin{proof}[Proof of Theorem \ref{thm:convergence_dual_phi_fem}, $H^1$ estimate]
    Let $\tilde u  \in H^{k+1}(\Omega_h)$ an extension of $u$ from $\Omega$ to $\Omega_h$ such that $\tilde{u} = u$ on $\Omega$ and
    $$\Vert \tilde{u}\Vert_{k+1,\Omega_h}\leqslant C\Vert u\Vert_{k+1,\Omega}\leqslant C \Vert f\Vert_{k-1,\Omega}.$$
    We consider $\tilde{f} := - \Delta \tilde{u}$
     and $p := \frac{h}{\varphi} \tilde{u}$ on $\Omega_h$. 
    Then,
    \begin{multline*} a_h (\tilde{u}, p ; v_h, q_h) = \int_{\Omega_h} \tilde f v_h
        - \sigma_D h^2 \int_{\Omega_h^\Gamma} \tilde f \Delta v_h\\
        + \frac{\gamma}{h^2} \int_{\Omega_h^\Gamma} (\tilde{u} - \frac{1}{h} \varphi_h p)(v_h - \frac{1}{h} \varphi_h q_h)\,
    \end{multline*}
    for each $(v_h,q_h)\in V_h^{(k)}\times Q_h^{(k)}$.
    We obtain the following Galerkin orthogonality
    \begin{multline}\label{eq:galerkin_proof_H1}
        a_h (\tilde{u} - u_h, p - p_h ; v_h, q_h) = \int_{\Omega_h} (\tilde f -f)v_h 
        - \sigma_D h^2 \int_{\Omega_h^\Gamma} (\tilde f - f) \Delta v_h\\
        + \frac{\gamma}{h^2} \int_{\Omega_h^\Gamma} (\tilde{u} - \frac{1}{h} \varphi_h p)(v_h - \frac{1}{h} \varphi_h q_h) .
    \end{multline}
       Thus, using the coercivity of the bilinear form (c.f. Proposition \ref{lemma:coercivity_dual_phi_fem}),
    \begin{align*}
        c \vertiii{(u_h - I_h \tilde{u}, p_h - I_hp)}_h & \leqslant \sup_{(v_h, q_h)} \frac{a_h(u_h - I_h \tilde{u}, p_h - I_hp; v_h, q_h)}{\vertiii{(v_h, q_h)}_h} \\
                                                         & \leqslant \sup_{(v_h, q_h)} \frac{ I - II - III}{\vertiii{(v_h, q_h)}_h},
    \end{align*}
    with
   $$
        I    = a_h (e_u, e_p ; v_h, q_h) \,,                                                                                  \quad 
        II   = \int_{\Omega_h} (\tilde f -f)v_h
        - \sigma_D h^2 \int_{\Omega_h^\Gamma} (\tilde f - f) \Delta v_h\,,                                                          $$
        $$  III  = \frac{\gamma}{h^2} \int_{\Omega_h^\Gamma} (\tilde{u} - \frac{1}{h} \varphi_h p)(v_h - \frac{1}{h} \varphi_h q_h) \,,
    $$
    where $e_u = \tilde{u} - I_h \tilde{u}$ and $e_p = p - I_hp$.
    It now remains to estimate each of the terms. For the term $I$, using the expression \eqref{eq:coercivity_first_eq}, the trace inequality \cite[Lemma 3.5]{phiFEM2}, interpolation inequalities and the expression \eqref{eq:norm_coercitiy_dual_poisson_phi_fem}, 
    \begin{equation*}
         I  \leqslant   
         Ch^{k} \| \tilde u \|_{k+1, \Omega_h} \vertiii{(v_h, q_h)}_h\\
         \leqslant C h^k \| f \|_{k-1, \Omega}  \vertiii{(v_h, q_h)}_h\,.
    \end{equation*}
    Since $\tilde f = f $ on $\Omega$ and $\Omega \subset \Omega_h$, using Lemma \ref{lemma:phifem_neumann_lemma_34}, it holds
$$\|\tilde f - f\|_{0,\Omega_h\setminus \Omega}\leqslant Ch^{k-1}\|\tilde f - f\|_{k-1,\Omega_h\setminus \Omega}.$$    
   Hence, using Lemma \ref{lemma:phi_p} and the interpolation inequalities,
    \begin{multline*}
         II \leqslant C  \| \tilde f -f \|_{0, \Omega_h\setminus \Omega} \left( \| v_h \|_{0, \Omega_h \setminus \Omega} + \sigma h^2 \| \Delta v_h \|_{0, \Omega_h \setminus \Omega} \right) \\ 
            \leqslant C h^k \| f \|_{k-1, \Omega_h} \vertiii{(v_h, q_h)}_h\,.
    \end{multline*}
    Let us now  estimate term $III$:
   Since  $\tilde{u} = \frac{1}{h} p \varphi$ on $\Omega_h^\Gamma$,
    \begin{equation*}
        III    \leqslant \frac{C}{h} \| \tilde{u} - \frac{1}{h} \varphi_h p \|_{0, \Omega_h^\Gamma} \vertiii{(v_h, q_h)}_h  
        \leqslant \frac{C}{h}\|\varphi-\varphi_h\|_{\infty, \Omega_h^\Gamma}\|\frac{p}{h}\|_{0, \Omega_h^\Gamma}  \vertiii{(v_h, q_h)}_h\,.
        \end{equation*}
  Using Lemma \ref{lemma:hardy} and interpolation inequalities,
         \begin{align*}
         III & \leqslant C h^k \| \varphi \|_{W^\infty_{k+1}} \| \tilde{u} \|_{1, \Omega_h^\Gamma} \vertiii{(v_h, q_h)}_h \\ 
         & \leqslant C h^k \| f \|_{k-1,\Omega} \vertiii{(v_h, q_h)}_h\,.
     \end{align*}
Using the previous estimates and by definition of $\vertiii{\cdot}_h$, 
    \begin{equation*}
    | u_h - I_h \tilde u |_{1, \Omega_h} \leqslant \vertiii{(u_h - I_h \tilde{u}, p_h - I_hp)}_h \leqslant  C h^k \| f \|_{k-1, \Omega}\,.
    \end{equation*}
    Finally, using the triangle inequality and some interpolation estimates,
    \begin{equation*}
        | u_h - u |_{1, \Omega} \leqslant | u_h - I_h \tilde u |_{1, \Omega_h} + | I_h \tilde{u} - \tilde{u} |_{1, \Omega_h} \leqslant C h^k \| f \|_{k-1, \Omega} \,.
    \end{equation*}
\end{proof}

\section{Numerical illustration}

We conclude this study by numerical test cases implemented in Python, using the Dolfinx \cite{DOLFINx} library. For these two test cases, we compare our scheme  to a standard finite element method  and to the first $\varphi$-FEM scheme presented in \cite{phifem},
respectively referred as "Dual $\varphi$-FEM", "Standard-FEM" and "Direct $\varphi$-FEM" in the figures.

\subsection{First test case: a 2D complex geometry}

We first consider a complex geometry (see Fig.~\ref{fig:results_test_case_1_output}) defined by 
\begin{equation}\label{eq:phi_gaussian_reconstruction_product}
      \varphi(x,y) = (-1)^5 \prod_{j=1}^5 \bigg( -1 + \exp\left(- \frac{x_j^2}{2l_{x,j}^2} - \frac{y_j^2}{2l_{y,j}^2} \right)  \bigg) - 0.5\,,
\end{equation}
with \[ x_j = \cos(\theta_j) (x-x_{0,j}) - \sin(\theta_j) (y-y_{0,j}) \text{ and } y_j = \sin(\theta_j) (x-x_{0,j}) + \cos(\theta_j) (y-y_{0, j})  \,,\]
for some given parameters $(x_{0,j}, y_{0,j}, l_{x,j}, l_{y,j}, \theta_j)$ given in Table \ref{tab:params_phi}. This expression was chosen to fit a liver geometry in an other study.

\begin{table}
\centering
\begin{tabular}{|c|c|c|c|c|c|}
\hline
 $j$ & $x_{0,j}$& $y_{0,j}$ & $l_{x,j}$ & $l_{y,j}$ & $\theta_j$ \\
\hline
1 & 0.356 & 0.507 & 0.145 & 0.171 & 0.000 \\
2 & 0.588 & 0.589 & 0.153 & 0.090 & 0.000 \\
3 & 0.569 & 0.588 & 0.008 & 0.008 & 0.006 \\
4 & 0.308 & 0.443 & 0.055 & 0.116 & 0.622 \\
5 & 0.741 & 0.643 & 0.058 & 0.035 & 0.000 \\
\hline
\end{tabular}
\caption{\textbf{Test case 1.} Parameters of $\varphi$.}\label{tab:params_phi}
\end{table}
The source term is given by \(f(x,y)=\cos(x)\exp(y)\), and homogeneous Dirichlet boundary conditions are imposed on \(\Gamma\).
The parameters of the penalized \(\varphi\)-FEM scheme are set to \(\gamma=100\) and \(\sigma_D=0.1\).
The reference solution and the corresponding absolute errors are displayed in Fig.~\ref{fig:results_test_case_1_output}.

In Fig.~\ref{fig:results_test_case_1}, we compare the relative \(L^2\) and semi-\(H^1\) errors with respect to a fine standard FEM solution computed on a mesh with \(h \approx 0.002\).
The three methods yield comparable results, each exhibiting optimal convergence rates in both the \(L^2\) and \(H^1\) norms.

Furthermore, Fig.~\ref{fig:cond_test_case_1} (left) compares the total computational time of each method, including mesh generation, cell selection, and assembly and solution of the finite element system.
These results show that both \(\varphi\)-FEM approaches are faster than the standard FEM.
However, since the finite element system associated with the penalized scheme is larger due to the introduction of an additional variable, higher computation times are observed compared to the direct \(\varphi\)-FEM.
Finally, Fig.~\ref{fig:cond_test_case_1} (right) illustrates that optimal conditioning is achieved.

\begin{figure}
    \centering
    \includegraphics[width=\textwidth]{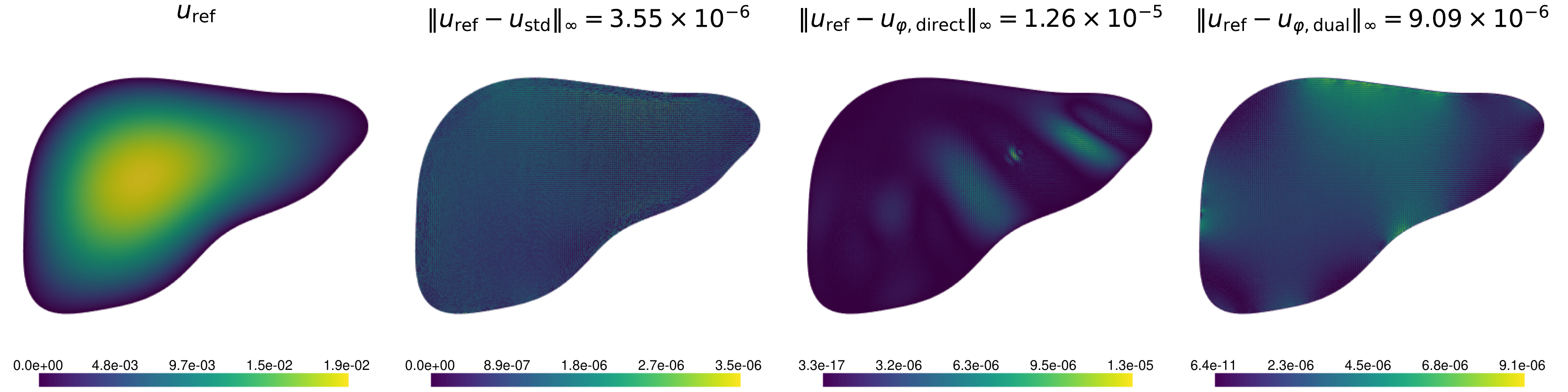}
    \caption{\textbf{Test case 1.} From left to right: reference solution (standard FEM with a fine mesh); difference between the reference solution and the projection of each approximated solution (Standard FEM, Direct $\varphi$-FEM, Dual $\varphi$-FEM).}
    \label{fig:results_test_case_1_output}
\end{figure}

\begin{figure}
    \centering
    \includegraphics[width=\textwidth]{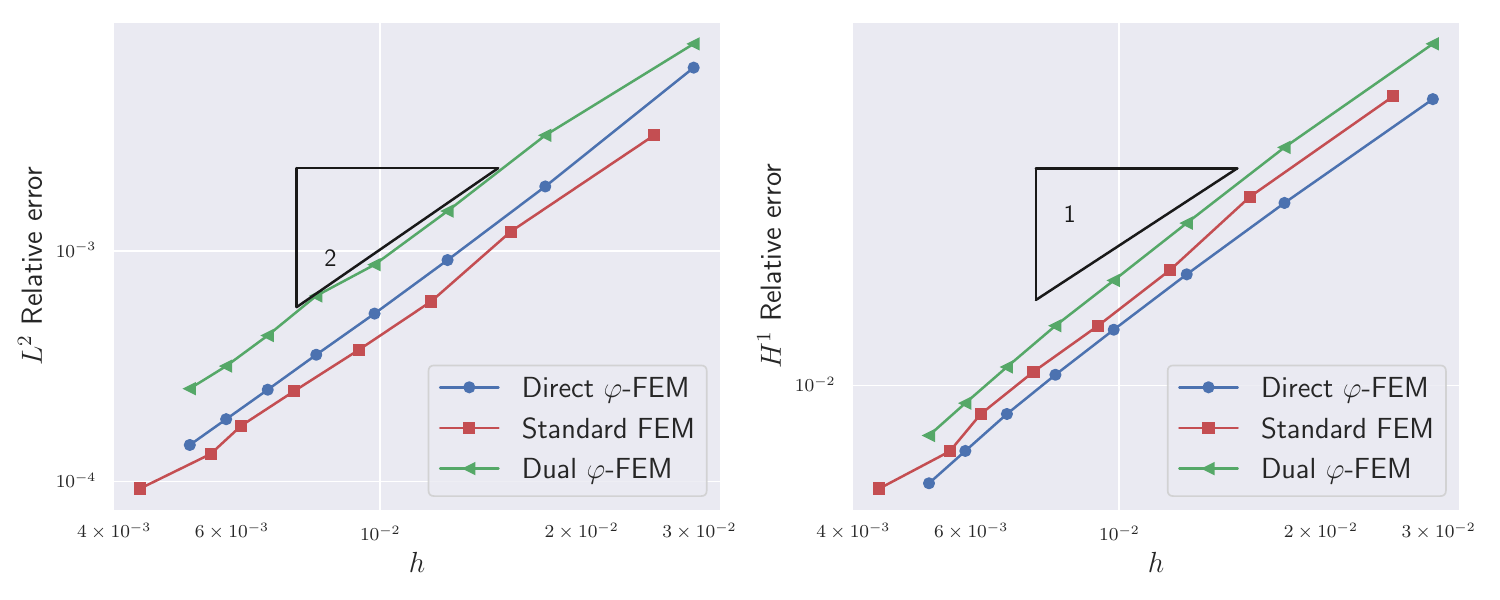}
    \caption{\textbf{Test case 1.} $L^2$ (left) and semi-$H^1$ (right) relative errors with respect to the mesh size.}
    \label{fig:results_test_case_1}
\end{figure}

\begin{figure}
    \centering
    \includegraphics[width=\textwidth]{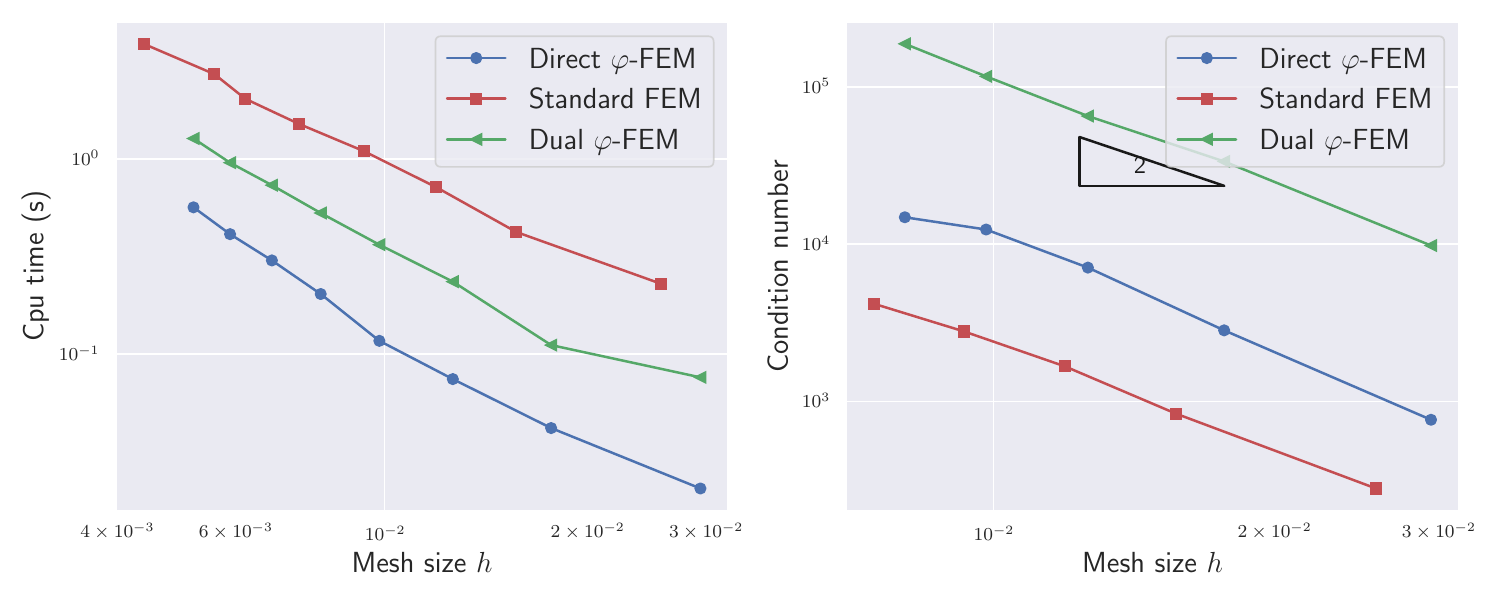}
    \caption{\textbf{Test case 1.} Left: computation time with respect to the mesh size. Right: condition number with respect to the mesh size.}
    \label{fig:cond_test_case_1}
\end{figure}

\subsection{Second test case: a 3D geometry}

For this test case, we consider a sphere described by the level-set function 
\[ \varphi(x,y,z) = -0.3125^2 + (x-0.5)^2 + (y-0.5)^2 + (z-0.5)^2, \]
and a manufactured solution given by $u_{ex} (x,y,z) = 1-\exp(\varphi(x,y,z)^2)$ such that $u_{ex}$ vanishes on $\Gamma$. {The parameters of the penalized $\varphi$-FEM scheme are set to $\gamma=100$ and $\sigma_D=0.01$.}
The reference solution and the absolute errors are given in Fig.~\ref{fig:errors_test_case_2}. The relative $L^2$ and $H^1$ errors are given in Fig~\ref{fig:results_test_case_2}, verifying the optimal convergence in both norms.

\begin{figure}
    \centering
    \includegraphics[width=\textwidth]{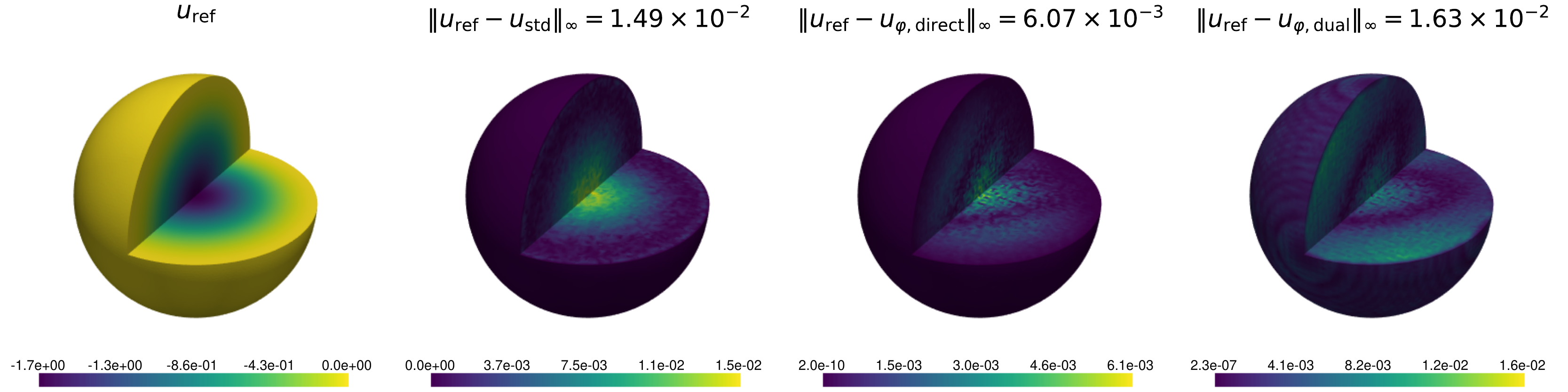}
   \caption{\textbf{Test case 2.} From left to right: reference solution (standard FEM with a fine mesh); difference between the reference solution and the projection of each approximated solution (Standard FEM, Direct $\varphi$-FEM, Dual $\varphi$-FEM).}   
   \label{fig:errors_test_case_2}
\end{figure}

\begin{figure}
    \centering
    \includegraphics[width=\textwidth]{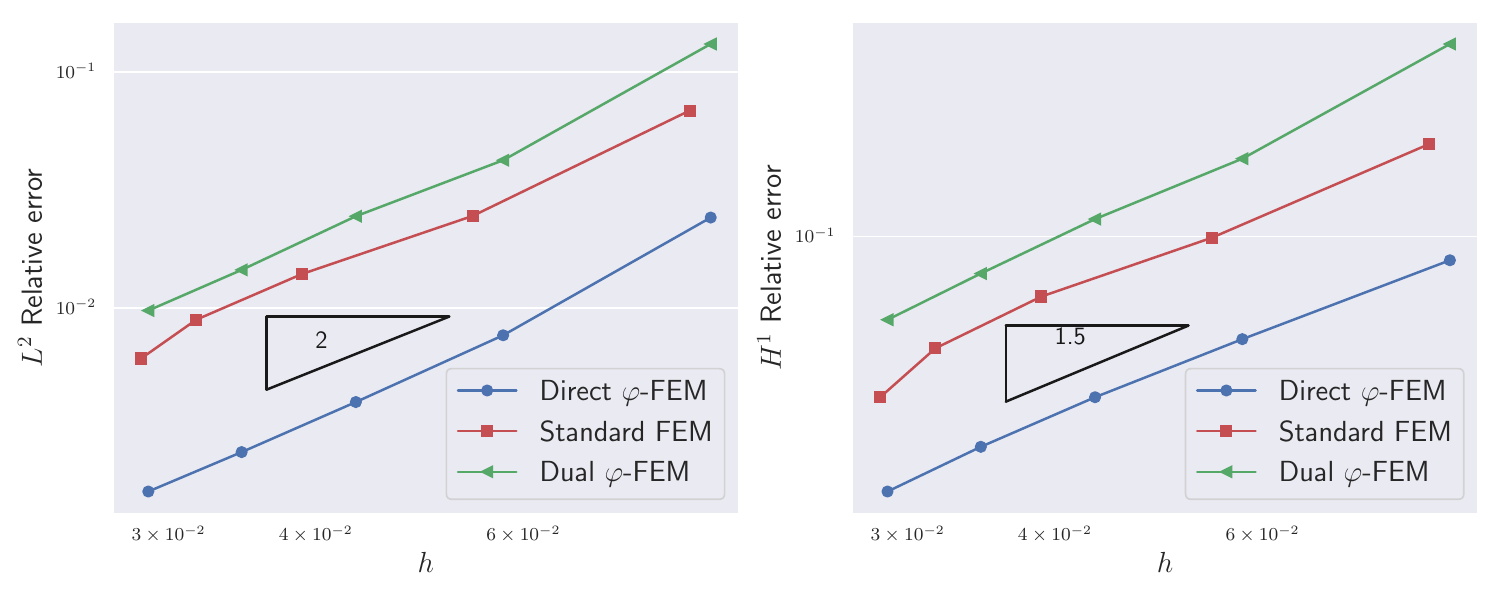}
   \caption{\textbf{Test case 2.} $L^2$ (left) and semi-$H^1$ (right) relative errors with respect to the mesh size.}   
   \label{fig:results_test_case_2}
\end{figure}

\section*{Acknowledgment}

This work was supported by the Agence Nationale de la Recherche, Project PhiFEM, under grant ANR-22-CE46-0003-01.
%\section{Conclusion}
%
%TO DO

\bibliographystyle{plain}
\bibliography{biblio}
\end{document}